\newcommand{\rem}[1]{}
\newtheorem{proposition}{Proposition}
\newtheorem{theorem}{Theorem}
\newtheorem{remark}{Remark}
\newtheorem{assumption}{Assumption}
\newenvironment{proof}[1][Proof]{\begin{trivlist}
\item[\hskip \labelsep {\bfseries #1}]}{\end{trivlist}}
\newcommand{\qed}{\nobreak \ifvmode \relax \else
      \ifdim\lastskip<1.5em \hskip-\lastskip
      \hskip1.5em plus0em minus0.5em \fi \nobreak
      \vrule height0.75em width0.5em depth0.25em\fi}
\newcommand{\mysubeq}[2]{\begin{subequations}\label{#1}
\begin{align}
#2
\end{align}\end{subequations}}
\begin{document}
%

\title{\vspace{0.25in}{Identifying Parameter Space for Robust Stability in Nonlinear Networks: A Microgrid Application}}



%
\author{\IEEEauthorblockN{Soumya Kundu,
Wei Du, Sai Pushpak Nandanoori,
Frank Tuffner, and
Kevin Schneider}
\IEEEauthorblockA{Electricity Infrastructure and Buildings Division\\
Pacific Northwest National Laboratory, Richland, WA 99354 USA\\
Email:\,\{soumya.kundu,\,wei.du,\,saipushpak.n,\,frank.tuffner,\,kevin.schneider\}@pnnl.gov}
\vspace{-0.25in}}


\maketitle

\begin{abstract}
As modern engineering systems grow in complexity, {attitudes toward} a modular design approach become increasingly more favorable. A key challenge to a modular design approach is the {certification} of robust stability under uncertainties in the rest of the network. In this paper, we consider the problem of identifying the parametric region, which guarantees stability of the connected module in the robust sense under uncertainties. We derive the conditions under which the robust stability of the connected module is guaranteed for some values of the design parameters, and present a sum-of-squares (SOS) optimization-based algorithm to identify such a parametric region for polynomial systems. Using the example of an inverter-based microgrid, we show how this parametric region {changes} with variations in the level of uncertainties in the network.
\end{abstract}



\IEEEpeerreviewmaketitle

\section{Introduction}

With the growing complexity of modern engineering systems, {attitudes toward} reconfigurability and modular design approaches are gaining popularity. Plug-and-play design approaches have drawn attention {for use} in cyber-physical networks, power grids, biological networks, and process control systems \cite{baheti2011cyber,farhangi2010path,huang2011future,litcofsky2012iterative,bendtsen2013plug}. In the context of microgrids, and power systems in general, the plug-and-play design approach is particularly attractive because of the involvement of various stakeholders (not all resources/equipment on the network are owned by the same utility). A hierarchical design is often preferred, where a network-level assessment of the operational conditions sets certain interconnection guidelines (from the dynamic security and economic considerations) to which individual resource owners (or resource aggregators) adhere when plugging in their resource to the network \cite{nehrir2011review,planas2013general,lasseter2011smart}. As such, {a key challenge for a successful plug-and-play operation} is the identification of the design parameter space that certifies robust stability under various operational conditions of the network.

Unlike bulk power systems, which have adequate rotational inertia to naturally stabilize fluctuations in the network, the dynamic security of low-inertia microgrids needs to be specifically ensured via design \cite{Xu:2018,mashayekh2018security}. Identification of droop-coefficients for stability certification of inverter-based microgrids have been investigated in recent works \cite{Schiffer:2014,Vorobev:2018}. A centralized approach of identifying the droop-coefficients for a lossless microgrid was adopted in \cite{Schiffer:2014}, while conditions on droop-coefficients were derived in a distributed approach for small-signal stability in \cite{Vorobev:2018}. However, low-to-medium voltage microgrids typically have significant line resistance-to-reactance ratios, and often operate in a nonlinear regime due to fluctuations from renewable generation, rendering the aforementioned approaches inapplicable. 

Lyapunov function methods have been widely used in the context of nonlinear systems stability certification \cite{Lyapunov:1892,Khalil:1996}. Extension of the theory to robust stability problems under uncertainties as well as parametric stability analysis have been proposed \cite{blanchini1999set,Gahinet:1996}. More recent works have used advanced computational techniques, such as sum-of-squares (SOS) algorithms, for parametric stability analysis using Lyapunov functions \cite{Anderson:2015,Wloszek:2003,Parrilo:2000,Tan:2006,Anghel:2013}. Lyapunov-based methods have been applied to robust stability analysis and control problems in power grids \cite{vu2017framework,wang1998robust}. Chebyshev minimax formulation has been used for identifying the parametric stability region for linear systems (with Lur'e-type nonlinearity) \cite{siljak1989parameter}. The construction of the design parameter space that ensures robust stability of nonlinear networks, though, still remains a challenge. 

The main contributions of this article are - 1) the theoretical construction of robust stability certificates in the design parameter space for nonlinear systems, under exogenous time-varying but bounded uncertainties; and 2) an algorithmic approach to identifying the largest robust stability region in the parametric space for polynomial networks. Numerical illustrations are provided in the context of identifying droop-coefficient values for robustly stable plug-and-play design of inverter-based microgrids. The rest of this article is structured as follows: Section\,\ref{S:prelim} provides a background of the relevant theoretical and computational methods; Section\,\ref{S:problem} presents a description of the microgrid example and the problem formulation; Sections\,\ref{S:theory} and \ref{S:algo} describe the theoretical and algorithmic approach; while a numerical example is presented in Section\,\ref{S:example}. We conclude this article in Section\,\ref{S:concl}. Throughout the text, $\left|\,\cdot\,\right|$ will be used to denote both the $\mathcal{L}_2$-norm of a vector and the absolute value of a scalar; while $\nabla_x$ denotes the gradient (of a function) with respect to $x$\,.

\section{Preliminaries}\label{S:prelim}

\subsection{Stability Analysis: Lyapunov Functions}

Consider a nonlinear system of the form:
\begin{align}\label{E:f}
\mathcal{S}:\quad\dot{x}  = f(x)\,,\quad x\in\mathcal{X}\subseteq\mathbb{R}^n
\end{align}
{The equilbrium point of interest is shifted to the origin ($0\in\mathcal{X}$) and $f$ is assumed to be locally Lipschitz in $\mathcal{X}$}. 
%
The equilibrium at origin is said to be locally asymptotically stable if 1) for every $\nu>0$ there exists an $\epsilon>0$ such that $|x(t)|\leq\nu\,\forall t\geq 0$ for every $|x(0)|\leq\epsilon$\,, and 2) $\lim_{t\rightarrow\infty}\left\vert x(t)\right\vert =0\,$ for every $x(0)$ in $\mathcal{X}$\,. Lyapunov's stability conditions state\cite{Lyapunov:1892,Khalil:1996} :
%
\begin{theorem}\label{T:Lyap}
Existence of a continuously differentiable radially unbounded positive definite function $\Psi\!:\!\mathcal{X}\!\rightarrow\!\mathbb{R}_{\geq 0}$ (Lyapunov function) {where $\nabla_x{\Psi}^T\!f(x)$ is negative definite in $\mathcal{X}$ guarantees asymptotic} stability of the origin. 
\end{theorem}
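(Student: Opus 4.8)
The plan is to follow the classical two-part argument for Lyapunov's direct method: first establish Lyapunov stability, then attractivity, in both cases exploiting the fact that $\Psi$ is nonincreasing along solutions of $\mathcal{S}$.

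For Lyapunov stability, I would fix $\nu>0$ and choose $r\in(0,\nu]$ small enough that the closed ball $\bar{\mathcal{B}}_r=\{x:|x|\le r\}\subseteq\mathcal{X}$. Since $\Psi$ is continuous and positive definite and the sphere $\{|x|=r\}$ is compact, $\alpha:=\min_{|x|=r}\Psi(x)>0$. Picking any $\beta\in(0,\alpha)$, define the sublevel set $\Omega_\beta:=\{x\in\mathcal{B}_r:\Psi(x)\le\beta\}$; continuity of $\Psi$ together with $\Psi(0)=0$ gives an $\epsilon>0$ with $\mathcal{B}_\epsilon\subseteq\Omega_\beta$. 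The key observation is that $\Omega_\beta$ is positively invariant: along any solution, $\tfrac{d}{dt}\Psi(x(t))=\nabla_x\Psi^T f(x)\le 0$, so $\Psi(x(t))\le\Psi(x(0))\le\beta$ for all $t\ge0$, and the trajectory cannot touch the sphere $|x|=r$ where $\Psi\ge\alpha>\beta$. Hence every solution with $|x(0)|\le\epsilon$ stays in $\mathcal{B}_r$, i.e. $|x(t)|\le\nu$ for all $t\ge0$; boundedness of the trajectory also yields forward completeness of the solution.

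For attractivity, I would argue by contradiction. Fix an initial condition in $\mathcal{B}_\epsilon$; then $t\mapsto\Psi(x(t))$ is nonincreasing and bounded below by $0$, so it converges to some $c\ge0$. Suppose $c>0$. By continuity of $\Psi$ there is $d>0$ with $\Psi(x)<c$ whenever $|x|<d$, so the trajectory remains in the compact annulus $\mathcal{A}:=\{x:d\le|x|\le r\}$ for all $t\ge0$. Since $\nabla_x\Psi^T f$ is continuous and negative definite, it attains a maximum $-\gamma<0$ on $\mathcal{A}$, whence $\Psi(x(t))\le\Psi(x(0))-\gamma t$, which becomes negative for large $t$, contradicting $\Psi\ge0$. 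Therefore $c=0$, and positive definiteness of $\Psi$ forces $|x(t)|\to0$. Radial unboundedness enters precisely here to guarantee that sublevel sets remain bounded, so the same reasoning gives the global statement when $\mathcal{X}=\mathbb{R}^n$.

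I expect the attractivity step to be the main obstacle: monotone decrease of $\Psi$ only shows that $\Psi(x(t))$ converges to \emph{some} limit, not that the limit is zero, so the compactness argument producing a uniform strictly-negative bound on $\nabla_x\Psi^T f$ over an annulus bounded away from the origin is the essential ingredient. A secondary technical point deserving care is the forward completeness of solutions, which here comes for free from the invariance of the bounded set $\Omega_\beta$.
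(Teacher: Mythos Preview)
Your argument is the standard textbook proof (essentially the one in Khalil) and is correct. Note, however, that the paper does not actually supply a proof of this theorem: it is quoted as a classical result with citations to \cite{Lyapunov:1892,Khalil:1996}, so there is no ``paper's own proof'' to compare against. Your write-up would serve perfectly well as a self-contained proof if one were desired.
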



\subsection{Parametric Lyapunov Analysis}
Consider a dynamical system in a parametric form:
\mysubeq{E:flam}{
\dot{x} &= f(x,\lambda)\,,\,\quad x\in\mathcal{X}\,,\,\lambda\in\Lambda\\
\text{where}\,~\mathcal{X}&:=\lbrace x\left| \,a_i(x)\geq 0\,,\,i\in\lbrace 1,\dots,p\rbrace\right.\rbrace.
}
{$\lambda\in\mathbb{R}^l$ is an $l$-dimensional vector of the design parameters and $f$ is assumed to be locally Lipschitz continuous in $\mathcal{X}$.}
%
Assume that over the range of possible values of the parameters, the equilibrium point of interest always remains at the origin, i.e.,
%
$f(0,\lambda) = 0~\forall \lambda\in\Lambda$\,.
%
%
Stability of such systems can be analyzed in a similar treatment to that of the \textit{absolute stability} problem \cite{Khalil:1996,Anderson:2015}. We argue that the origin of the parametric system \eqref{E:flam} is locally asymptotically stable in $\mathcal{X}$ if there exists a continuously differentiable parametric Lyapunov function $\Psi:\mathcal{X}\times\Lambda\rightarrow\mathbb{R}_{\geq 0}$ satisfying 
\begin{subequations}\label{E:Vun}
\begin{align}
\Psi(x,\lambda)&\geq \phi_1(x)\quad \forall (x,\lambda)\in\mathcal{X}\times\Lambda\\
\nabla_x \Psi^Tf(x,\lambda)&\leq-\phi_2(x)\quad\forall (x,\lambda)\in\mathcal{X}\times\Lambda
\end{align}\end{subequations}
for some positive definite functions $\phi_1(\cdot)$ and $\phi_2(\cdot)$\,. 
%
The conditions can be extended to the situations when the equilibrium point depends on the values of the parameter. For example, if the equilibrium point of interest is an explicit function of the parameter, $x_0(\lambda)$\,, then the above argument holds after shifting of the state variables $\tilde{x}=x-x_0(\lambda)$\,. 
%

\subsection{Sum-of-Squares Optimization}
Relatively recent studies have explored how SOS-based methods can be utilized to find Lyapunov functions by restricting the search space to SOS polynomials \cite{Wloszek:2003,Parrilo:2000,Tan:2006,Anghel:2013}. 
Let us denote by $\mathbb{R}\left[x\right]$ the ring of all polynomials in $x\in\mathbb{R}^n$. 
A multivariate polynomial $p\in\mathbb{R}\left[x\right],~x\in\mathbb{R}^n$, is an SOS if there exist some polynomial functions $h_i(x), i = 1\ldots s$ such that 
$p(x) = \sum_{i=1}^s h_i^2(x)$.
We denote the ring of all SOS polynomials in $x$ by $\Sigma[x]$.
Whether or not a given polynomial is an SOS is a semi-definite problem which can be solved with SOSTOOLS, a MATLAB$^\text{\textregistered}$ toolbox \cite{sostools13}, along with a semi-definite programming solver such as SeDuMi \cite{Sturm:1999}. 
An important result from algebraic geometry, called Putinar's Positivstellensatz theorem \cite{Putinar:1993,Lasserre:2009}, helps in translating conditions such as in \eqref{E:Vun} into SOS feasibility problems. 
\begin{theorem}\label{T:Putinar}
Let $\mathcal{K}\!\!=\! \left\lbrace x\in\mathbb{R}^n\left\vert\, k_1(x) \geq 0\,, \dots , k_m(x)\geq 0\!\right.\right\rbrace$ be a compact set, where $k_j$ are polynomials. Define $k_0=1\,.$ Suppose there exists a $\mu\!\in\! \left\lbrace {\sum}_{j=0}^m\sigma_jk_j \left\vert\, \sigma_j \!\in\! \Sigma[x]\,\forall j \right. \right\rbrace$ such that $\left\lbrace \left. x\in\mathbb{R}^n \right\vert\, \mu(x)\geq 0 \right\rbrace$ is compact. Then,  
\begin{align*}
p(x)\!>\!0~\forall x\!\in\!\mathcal{K}
\!\implies\! p \!\in\! \left\lbrace {\sum}_{j=0}^m\sigma_jk_j \left\vert\, \sigma_j \!\in\! \Sigma[x]\,\forall j \right. \right\rbrace\!.
\end{align*}
\end{theorem}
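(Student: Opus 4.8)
This is the classical Archimedean Positivstellensatz of Putinar, and I would reproduce the standard functional--analytic argument rather than anything tailored to the present paper. Write $M=\left\lbrace {\sum}_{j=0}^m\sigma_jk_j \left\vert\, \sigma_j \in \Sigma[x] \right.\right\rbrace$ for the quadratic module generated by $k_1,\dots,k_m$: a convex cone in $\mathbb{R}[x]$ that contains $\Sigma[x]$ and is closed under multiplication by elements of $\Sigma[x]$. The goal is to show $p\in M$ whenever $p>0$ on $\mathcal{K}$, and the plan is to argue by contradiction, producing from the assumption $p\notin M$ a nonzero positive measure supported on $\mathcal{K}$ against which $p$ would integrate to a nonpositive number.

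First I would use the hypothesis on $\mu$: the existence of $\mu\in M$ with $\left\lbrace x\,\vert\,\mu(x)\geq 0\right\rbrace$ compact is well known to be equivalent to $M$ being \emph{Archimedean}, i.e.\ to the existence of $N>0$ with $N-|x|^2\in M$; this reduction I would import from the literature. Next, equip $\mathbb{R}[x]$ with its finest locally convex topology. The Archimedean property makes $1$ an algebraic interior point of $M$, so $M$ has nonempty interior; since $p\notin M$ and $M$ is convex, a Hahn--Banach (Eidelheit-type) separation argument yields a nonzero linear functional $L:\mathbb{R}[x]\rightarrow\mathbb{R}$ with $L\geq 0$ on $M$ and $L(p)\leq 0$, which we normalize so that $L(1)=1$ (possible because $1\in\mathrm{int}\,M$ forces $L(1)>0$).

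The technical heart is to show that $L$ is represented by a positive Borel measure supported on $\mathcal{K}$. Since $\Sigma[x]\subseteq M$, the bilinear form $(f,g)\mapsto L(fg)$ is positive semidefinite, and since $(|x|^2)^{k}(N-|x|^2)\in M$ for every $k\geq 0$ one obtains by induction the Carleman-type growth bound
\begin{equation*}
L\!\left(\left(|x|^2\right)^{k}\right)\leq N^{k},\qquad k=0,1,2,\dots
\end{equation*}
which in turn controls every moment $|L(x^\alpha)|$. By the solution of the multivariate Hamburger moment problem under Carleman's condition (equivalently Haviland's theorem together with determinacy), there exists a positive measure $\rho$ on $\mathbb{R}^n$ with $L(f)=\int f\,d\rho$ for all $f\in\mathbb{R}[x]$. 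Feeding the inequalities $\int(|x|^2)^{k}(N-|x|^2)\,d\rho\geq 0$ back in confines $\mathrm{supp}\,\rho$ to the ball $\left\lbrace|x|^2\leq N\right\rbrace$; and since $\int k_j q^2\,d\rho=L(k_jq^2)\geq 0$ for every polynomial $q$, a standard localization/approximation argument forces $k_j\geq 0$ $\rho$-almost everywhere for each $j$, hence $\mathrm{supp}\,\rho\subseteq\mathcal{K}$.

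Finally, $\rho$ is a nonzero positive measure with $\rho(\mathbb{R}^n)=L(1)=1$ and $\mathrm{supp}\,\rho\subseteq\mathcal{K}$, on which $p>0$; therefore $L(p)=\int p\,d\rho>0$, contradicting $L(p)\leq 0$, so $p\in M$. The main obstacle is the measure-representation step: turning the abstract separating functional $L$ into a genuine measure on $\mathcal{K}$ is exactly where the Archimedean hypothesis is indispensable and where the depth of the theorem resides, relying on moment-problem machinery; by comparison the convex-separation step and the closing contradiction are routine.
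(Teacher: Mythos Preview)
Your sketch is a faithful outline of the standard functional--analytic proof of Putinar's Positivstellensatz: separate $p$ from the quadratic module $M$ by Hahn--Banach, use the Archimedean hypothesis to get Carleman-type moment bounds, represent the separating functional by a probability measure supported on $\mathcal{K}$, and derive a contradiction. The argument is correct at the level of a sketch, with the usual caveats that the equivalence ``$\mu\in M$ with compact positivity set $\Leftrightarrow$ $M$ Archimedean'' and the moment-problem representation step are themselves nontrivial results you are importing.

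However, there is nothing to compare against: the paper does \emph{not} prove Theorem~\ref{T:Putinar}. It is stated in the preliminaries section as a known result, with citations to Putinar's original paper and Lasserre's monograph, and is then used as a black box to convert polynomial positivity constraints into SOS feasibility conditions. So your proposal is not an alternative to the paper's proof but rather a (correct) reconstruction of the classical proof that the paper simply cites.
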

Using Theorem\,\ref{T:Putinar}, we can translate the problem of checking that $p\!>\!0$ on $\mathcal{K}$ into an SOS feasibility problem where we seek the SOS polynomials $\sigma_0\,,\,\sigma_j\,\forall j$ such that $p\!-\!\sum_j\sigma_j k_j$ is {SOS.
Note that any} equality constraint $k_i(x)\!=\!0$ can be expressed as two inequalities $k_i(x)\!\geq 0$ and $k_i(x)\!\leq\! 0$. In many cases, especially for the $k_i\,\forall i$ used throughout this work, a $\mu$ satisfying the conditions in Theorem\,\ref{T:Putinar} is guaranteed to exist (see \cite{Lasserre:2009}), and need not be searched for.
%

\section{Problem Description}\label{S:problem}

\subsection{Motivational Example: Microgrids}
Design of a networked microgrid involves solving an optimization problem that ensures operational reliability (e.g., transient stability) while achieving certain economic goals \cite{Barnes:2017}. Typically this translates to identifying the largest region in the space of design parameters that certify stability of the system under a set of uncertainties. Consider the case of droop-controlled inverters \cite{Coelho:2002,Schiffer:2014}:
\mysubeq{E:droop}{
\dot{\theta}_i & = \omega_i\,,\\
\tau_i\dot{\omega}_i & = -\omega_i + \lambda_i^p \left(P_i^d-P_i\right)\\
\tau_i\dot{v}_i & = v_i^d-v_i + \lambda_i^q \left(Q_i^d-Q_i\right)
}
where $\lambda_i^p>0$ and $\lambda_i^q>0$ are the droop-coefficients associated with the active power vs. frequency and the reactive power vs. voltage droop curves, respectively; $\tau_i$ is the time-constant of a low-pass filter used for the active and reactive power measurements; $\theta_i\,,\,\omega_i$ and $v_i$ are, respectively, the phase angle, frequency, and voltage magnitude; $v^d_i,\,P_i^d$ and $Q^d_i$ are the nominal values of the voltage magnitude, active power, and reactive power, respectively. $P_i$ and $Q_i$ are, respectively, the active and reactive power injected into the network, related to the neighboring bus voltage phase angle and magnitude as:
\mysubeq{E:PQ}{
P_i &= v_i{\sum}_{k\in\mathcal{N}_i} v_k\left(G_{i,k}\cos\theta_{i,k} + B_{i,k}\sin\theta_{i,k}\right)\\
Q_i &= v_i{\sum}_{k\in\mathcal{N}_i} v_k\left(G_{i,k}\sin\theta_{i,k} - B_{i,k}\cos\theta_{i,k}\right)
}
where $\theta_{i,k}=\theta_i-\theta_k$\,, and $\mathcal{N}_i$ is the set of neighbor nodes; and $G_{i,k}$ and $B_{i,k}$ are respectively the transfer conductance and susceptance values of the line connecting the nodes $i$ and $k$\,. {Considering the droop-coefficients as the design parameters, the goal of this work is the algorithmic identification of the design space that ensures robust stability of the microgrid. Note that the particular choice of droop-coefficients as design parameters is for illustrative purpose only, while the proposed algorithm is generalizable to other choices of design parameters (such as line parameters and dispatched power set-points).} 

The nominal (or desired) equilibrium is attained when 
\begin{align*}
\forall i:\quad P_i=P_i^d,\,Q_i=Q_i^d,\,\omega_i=0,\,v_i=v_i^d\,.
\end{align*} 
In a plug-and-play {operation}, it is important that the design parameters are chosen to ensure robust stability of the (possibly) \textit{time-varying} equilibrium point of the connected inverter under bounded uncertainties in the (rest of the) network. 
Moreover, an additional constraint that needs to be enforced through the choice of design parameters is that the equilibrium point under uncertainties should stay \textit{close} to the nominally desired equilibrium of $(\omega_i,v_i)=(0,v^d_i)$\,. This ensures that even under uncertainties, the operating conditions remain \textit{acceptable}.

{After introducing} the following variables:
\begin{align}\label{E:droop_disturbance}
\delta_{1,i,k}:=v_k\cos\theta_{i,k}\,\text{ and }\,\delta_{2,i,k} := v_k\sin\theta_{i,k}\,,
\end{align}
the inverter dynamics \eqref{E:droop}-\eqref{E:PQ} can be reformulated in the polynomial form as follows:
\mysubeq{E:droop_poly}{
\!\!\!\tau_i\dot{\omega}_i & = -\omega_i \!+\! \lambda_i^p \left[P_i^d\!-\!v_i\!\!\!\sum_{k\in\mathcal{N}_i} \!(G_{i,k}\delta_{1,i,k} \!+\! B_{i,k}\delta_{2,i,k})\right]\!\!\\
\!\!\!\tau_i\dot{v}_i & = v_i^d\!-\!v_i \!+\! \lambda_i^q \left[Q_i^d\!-\!v_i\!\!\!\sum_{k\in\mathcal{N}_i} \!(G_{i,k}\delta_{2,i,k} \!-\! B_{i,k}\delta_{1,i,k})\right]\!\!
}
In the reformulation, the phase angle dynamics are dropped, since the phase angle differences (represented in $\delta_{1,i,k}$ and $\delta_{2,i,k}$) are sufficient to model the power flow across networks.

\subsection{Problem Formulation}
Consider an uncertain polynomial dynamical system which is represented in a parametric form as follows:
\mysubeq{E:flamd}{
\mathcal{S}[\lambda,\delta]:~\dot{x}(t) &= f(x(t),\lambda,\delta(t))\,,\,~\left\lbrace\begin{array}{rl}
x(t)\!\!&\!\!\in\mathcal{X}\,,\\
\lambda\!\!&\!\!\in\Lambda\,,\\
\delta(t)\!\!&\!\!\in\mathcal{D}
\end{array} \right.\\
\text{where}\,~\mathcal{X}&:=\lbrace x\left| \,a_i(x)\geq 0\,,\,i\in\lbrace 1,\dots,p\rbrace\right.\rbrace\,,\\
\mathcal{D}&:=\lbrace \delta\left| \,b_i(\delta)\geq 0\,,\,i\in\lbrace 1,\dots,q\rbrace\right.\rbrace
}
where $\delta(t)\in\mathbb{R}^d$ denote a $d$-dimensional vector of uncertain and (possibly) time-varying exogenous parameters, which lie in a semi-algebraic domain $\mathcal{D}$\,; $f,\,a_i,\,b_i$ are polynomials. For notational simplicity, we will henceforth drop the time parameter $t$ from the argument of $x$ and $\delta$\,, whenever obvious\,. Without any loss of generality, we assume that $0\in\mathcal{D}$, and that $x=0$ is an equilibrium of the system when $\delta=0$\,, i.e.,
\begin{align}\label{E:nom_eq}
f(0,\lambda,0) = 0\quad\lambda\in\Lambda\,.
\end{align}
Moreover, when $\delta\neq 0$\,, the equilibrium point of interest, $x_0(\lambda,\delta)$\,, is defined uniquely in the domain $(x,\lambda,\delta)\in\mathcal{X}\times\Lambda\times\mathcal{D}$ by the relationship:
\begin{align}\label{E:pert_eq}
x_0(\lambda,\delta):=\lbrace x\in\mathcal{X}\,|\,f(x,\lambda,\delta) = 0\rbrace~\forall (\lambda,\delta)\in\Lambda\times\mathcal{D}.
\end{align} 
\begin{remark}
We assume the explicit functional form $x_0(\lambda,\delta)$ to be available. Future work will address the issues when this relationship is implicit. Also note that the condition \eqref{E:nom_eq} holds when droop coefficients are chosen as the design parameters values. Future efforts will consider relaxing that condition.
\end{remark}

The problem we are interested in is identifying a set of possible values of the design parameter $\lambda$ that ensures the robust stability of the system \eqref{E:flamd} under bounded uncertainties, i.e., find the set $\widehat{\Lambda}\subseteq{\Lambda}$ such that the following hold:
\begin{enumerate}
\item the equilibrium point of interest, $x_0(\lambda,\delta)$, 
remain within an \textit{acceptable} region $\mathcal{X}_0\subseteq\mathcal{X}$ ($0\in\mathcal{X}_0$) for every uncertainty $\delta\in\mathcal{D}$ and for every design parameter $\lambda\in\widehat{\Lambda}$\,;
\item the \textit{locally} asymptotic stability of the equilibrium point $x_0(\lambda,\delta)$ of the uncertain system $\mathcal{S}[\lambda,\delta]$ in \eqref{E:flamd} is guaranteed for every $\delta\in\mathcal{D}$ and for every $\lambda\in\widehat{\Lambda}$\,.
\end{enumerate}


\section{Theoretical Construction}\label{S:theory}
In this section, we discuss the theoretical development regarding robust stability of the connected module over some parameter range, under bounded uncertainties.
\begin{assumption}\label{AS:closeness}
The system \eqref{E:flamd} admits a unique equilibrium point $x_0(\lambda,\delta)$ inside the domain $\lbrace x|\,|x|\!\leq\!\Delta\rbrace\!\subset\!\mathcal{X}$, i.e., 
\begin{align*}
(\lambda,\delta)\in\Lambda\times\mathcal{D}\implies\exists\, x_0(\lambda,\delta)\in\lbrace x|\,|x|\!\leq\!\Delta\rbrace\!\subset\!\mathcal{X}~\text{s.t.}~\eqref{E:pert_eq}
\end{align*}
\end{assumption}
Note that the value of $\Delta$ depends not only on the uncertainties, but also on the parameter values. Given some parameter value, $\Delta$ decreases as the uncertainty level goes down. On the other hand, given a range of uncertainties, we can choose the range of parameter values to lower $\Delta$\,.
\begin{assumption}\label{AS:lyapunov}
The system \eqref{E:flamd} admits a parametric Lyapunov function $\Psi(x,\lambda)$ satisfying the following:
\begin{align*}
\forall (x,\lambda,\delta)\in\mathcal{X}\!\times\!\Lambda\!\times\!\mathcal{D}:\quad \Psi(x,\lambda)\geq \phi_1(x\!-\!x_0(\lambda,\delta))\\
\nabla_x\Psi^T\!f(x,\lambda,\delta)\leq -\phi_2(x\!-\!x_0(\lambda,\delta))
\end{align*}
where the equilibrium of interest $x_0(\lambda,\delta)$ satisfies Assumption\,\ref{AS:closeness}; and $\phi_{1,2}(\cdot)$ are positive definite functions.
\end{assumption}

Let us define
\mysubeq{}{
\Gamma &:=\max\left\lbrace \gamma\,|\,|x|\leq \gamma\implies x\in\mathcal{X}\right\rbrace\\
\Gamma_\Psi(\lambda) &:=\max\left\lbrace \gamma\,|\,\Psi(x,\lambda)\leq \gamma\implies x\in\mathcal{X}\right\rbrace
} 
i.e., $\Gamma$ is the largest level-set of the $\mathcal{L}_2$-norm of the state $x$ contained within $\mathcal{X}$\,, while $\Gamma_\Psi(\lambda)$ is the maximum level-set of $\Psi(x,\lambda)$ contained within $\mathcal{X}$. Note that,
\begin{align*}
\Delta<\Gamma\,.
\end{align*}

\begin{proposition}\label{P:bounded} (Boundedness)
Let us define the following:
\mysubeq{E:bounds}{
\zeta^*(\lambda)&=\min\left\lbrace \zeta\,\left| \begin{array}{c} |x-x_0(\lambda,\delta)|\leq\Delta,\,\delta\in\mathcal{D}\\
\implies\Psi(x,\lambda)\leq\zeta\end{array}\right.\right\rbrace\\
\nu^*(\lambda)&=\min\left\lbrace \nu\,\left| \begin{array}{c} \Psi(x,\lambda)\leq\zeta^*,\,\delta\in\mathcal{D}\\
\implies |x-x_0(\lambda,\delta)|\leq\nu\end{array}\right.\right\rbrace.
}
For sufficiently weak uncertainties satisfying 
\begin{align}
\Delta<\Gamma-\nu^*(\lambda)\,,\text{ and }\,\zeta^*(\lambda)<\Gamma_\Psi(\lambda)\,, 
\end{align}
there exists a $\xi>0$ for every $\nu\in[\nu^*(\lambda)+\Delta,\Gamma]$ such that $|x(0)|\leq\xi$ implies $|x(t)|\leq\nu$ for all $t\geq 0$\,.  
\end{proposition}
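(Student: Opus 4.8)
The plan is to identify a positively invariant sublevel set of $\Psi$ trapped between a small $\mathcal{L}_2$-ball about the origin and the ball of radius $\nu$, and then run the standard Lyapunov trapping argument. Fix $\lambda\in\Lambda$ satisfying the two smallness conditions and an arbitrary $\nu\in[\nu^*(\lambda)+\Delta,\,\Gamma]$, and set $\Omega:=\{x\,:\,\Psi(x,\lambda)\le\zeta^*(\lambda)\}$. First I would record three inclusions that are immediate from the definitions of $\Gamma_\Psi,\zeta^*,\nu^*$ and the two assumptions: (i) $\Omega\subseteq\mathcal{X}$, since $\zeta^*(\lambda)<\Gamma_\Psi(\lambda)$ and $\Gamma_\Psi(\lambda)$ is the largest level of $\Psi$ whose sublevel set lies in $\mathcal{X}$; (ii) $\{x\,:\,|x|\le\Delta\}\subseteq\Omega$, because $x_0(\lambda,0)=0$ by \eqref{E:nom_eq} and $0\in\mathcal{D}$, so $\{x\,:\,|x-x_0(\lambda,0)|\le\Delta\}$ is among the balls defining $\zeta^*(\lambda)$; and (iii) $\Omega\subseteq\{x\,:\,|x|\le\nu^*(\lambda)+\Delta\}\subseteq\{x\,:\,|x|\le\nu\}$, since $x\in\Omega$ forces $|x-x_0(\lambda,\delta)|\le\nu^*(\lambda)$ for every $\delta\in\mathcal{D}$ by the definition of $\nu^*(\lambda)$, while $|x_0(\lambda,\delta)|\le\Delta$ by Assumption\,\ref{AS:closeness} and $\nu^*(\lambda)+\Delta\le\nu$ by the choice of $\nu$. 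Inclusion (iii) shows $\Omega$ is bounded, and it is closed by continuity of $\Psi$, hence compact.

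The heart of the argument is that $\Omega$ is positively invariant for \eqref{E:flamd} under every admissible uncertainty signal $\delta(\cdot)$ taking values in $\mathcal{D}$. Along a trajectory with $x(0)\in\Omega$, put $V(t):=\Psi(x(t),\lambda)$; while $x(t)\in\mathcal{X}$, Assumption\,\ref{AS:lyapunov} gives $\dot V(t)=\nabla_x\Psi(x(t),\lambda)^Tf(x(t),\lambda,\delta(t))\le-\phi_2(x(t)-x_0(\lambda,\delta(t)))\le 0$. I would then argue by contradiction that $V(t)\le\zeta^*(\lambda)$ for all $t\ge 0$: if $t^*$ were the first instant $V$ exceeds $\zeta^*(\lambda)$, then on $[0,t^*]$ one has $V\le\zeta^*(\lambda)<\Gamma_\Psi(\lambda)$, hence $x(t)\in\mathcal{X}$ and indeed $x(t)\in\Omega$ — which is compact, so the solution cannot escape in finite time — so $\dot V\le 0$ on a neighbourhood of $t^*$, making $V$ non-increasing there and unable to cross $\zeta^*(\lambda)$ upward, a contradiction. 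Therefore $x(t)\in\Omega$ for all $t\ge 0$. Taking $\xi:=\Delta$ (any $\xi\in(0,\Delta]$ works): if $|x(0)|\le\xi$ then $x(0)\in\Omega$ by (ii), hence $x(t)\in\Omega$ for all $t\ge 0$ by invariance, hence $|x(t)|\le\nu$ for all $t\ge 0$ by (iii), which is the claim.

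The hard part is this invariance step, and it hinges on two features I would treat carefully: the Lyapunov inequality is assumed only on $\mathcal{X}$ rather than globally, and $x_0(\lambda,\delta(t))$ drifts with the time-varying $\delta(t)$. The first is exactly why the strict gap $\zeta^*(\lambda)<\Gamma_\Psi(\lambda)$ is imposed: it keeps $\Omega$ strictly inside the region where $\dot V\le 0$ is valid, so a trajectory cannot leave $\mathcal{X}$ before $\Psi$ would have had to increase past $\zeta^*(\lambda)$. The second is harmless because $\Psi$ does not depend on $\delta$ and the sign bound $-\phi_2(\cdot)\le 0$ holds pointwise in $\delta$, so only the weak inequality $\dot V\le 0$ — never strict decrease — enters; strict decrease (and $\phi_2$ itself) would matter only for an asymptotic-stability strengthening. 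Everything else is bookkeeping with $\Gamma,\Gamma_\Psi,\zeta^*,\nu^*$ and Assumptions\,\ref{AS:closeness}--\ref{AS:lyapunov}; the role of the other smallness condition $\Delta<\Gamma-\nu^*(\lambda)$ is merely to make the interval $[\nu^*(\lambda)+\Delta,\Gamma]$ nonempty.
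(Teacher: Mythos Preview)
Your argument is correct and rests on the same core mechanism as the paper's proof: the sublevel set $\Omega=\{\Psi(\cdot,\lambda)\le\zeta^*(\lambda)\}$ is positively invariant because $\dot\Psi\le 0$ on $\mathcal X$, and the chain of inclusions $\{|x|\le\Delta\}\subseteq\Omega\subseteq\{|x|\le\nu^*(\lambda)+\Delta\}$ converts this into an $\mathcal L_2$ bound. The one substantive difference is in how $\xi$ is produced. You take $\xi=\Delta$ uniformly in $\nu$, which suffices for the statement as written and never invokes the lower bound $\phi_1$. The paper instead, for each $\nu>\nu^*(\lambda)+\Delta$, enlarges the sublevel set to some $\{\Psi\le\zeta\}$ with $\zeta>\zeta^*$ still contained in $\{|x-x_0|\le\nu-\Delta\}$ (using the radial unboundedness of $\phi_1$), and then extracts a correspondingly larger $\xi$ from continuity of $\Psi$ at the origin; this yields the more informative $\nu$-dependent $\xi$ one expects from a standard stability estimate, at the cost of a slightly longer argument. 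Your treatment of the two subtleties you flag---that the Lyapunov inequality is only assumed on $\mathcal X$, and that $x_0(\lambda,\delta(t))$ is time-varying but $\Psi$ is not---is in fact more explicit than the paper's, which simply asserts $\Psi$ is non-increasing without isolating the role of $\zeta^*(\lambda)<\Gamma_\Psi(\lambda)$ in keeping the trajectory inside $\mathcal X$.
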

\begin{proof}
From Assumption\,\ref{AS:lyapunov}, we have 
\begin{align*}
\Psi(x(t),\lambda)-\Psi(x(0),\lambda)\leq-\int_0^t\phi_2(x(\tau)-x_0(\lambda,\delta))\,d\tau
\end{align*}
i.e., $\Psi(x,\lambda)$ is non-increasing in $x$ along the trajectories of the system \eqref{E:flamd}. Note that when $x(0)=0$\,, $|x(0)-x_0(\lambda,\delta)|\leq\Delta$\,. From \eqref{E:bounds} it follows immediately that $x(0)=0$ implies $\Psi(x(0),\lambda)\leq\zeta^*$ which implies $\Psi(x(t),\lambda)\leq\zeta^*$ for all $t\geq 0$\,. Applying \eqref{E:bounds} again, we have $|x(t)-x_0(\lambda,\delta)|\leq\nu^*$, such that 
\begin{align*}
\forall t\geq 0: ~|x(t)|\leq|x(t)-x_0(\lambda,\delta)|+|x_0(\lambda,\delta)|\leq\nu^*+\Delta\,.
\end{align*}
For sufficiently weak uncertainties satisfying $\Delta<\Gamma-\nu^*(\lambda)$\,, we have that $x(0)=0$ implies $|x(t)|<\Gamma$ for all $t\geq 0$\,. 

Now, for every $\nu\in(\nu^*(\lambda)+\Delta,\Gamma]$\,, we have 
\begin{align*}
|x(t)-x_0(\lambda,\delta)|\leq\nu-\Delta\implies |x(t)|\leq\nu\,.
\end{align*}
Moreover, because $\phi_1(\cdot)$ is a radially unbounded and positive definite function bounding $\Psi(x,\lambda)$ from below, for every such $\nu-\Delta>\nu^*$\,, we have a $\zeta>\zeta^*$ such that 
\begin{align*}
\Psi(x(t),\lambda)\leq\zeta\implies |x(t)-x_0(\lambda,\delta)|\leq\nu-\Delta\,.
\end{align*}
Since $\Psi(x,\lambda)$ is non-increasing in $x$ along system trajectories, and since $\phi_1(\cdot)$ is positive definite, there exists a $\xi>0$ for every $\zeta>\zeta^*$ such that 
\begin{align*}
|x(0)|\leq\xi&\implies\Psi(x(0),\lambda)\leq\zeta\\
&\implies\Psi(x(t),\lambda)\leq\zeta\implies|x(t)|\leq\nu\,.
\end{align*}
This completes the proof. {Fig.\,\ref{F:illustrate} illustrates the different level-sets used in the derivation.}\hfill\qed\end{proof}
%
\begin{figure}[thpb]
\centering
\includegraphics[scale=0.35]{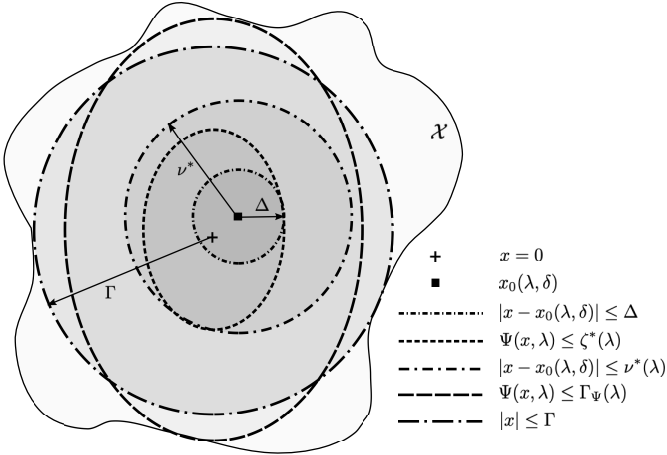}
\caption{{An illustration of the different level-sets.}}
\label{F:illustrate}
\end{figure}
\begin{proposition}\label{P:converge}(Convergence)
Let us define the following\footnote{Note that, by construction, $\mu^*(\lambda)<\Gamma$\,.}:
\begin{align}
\mu^*(\lambda):=\max\left\lbrace \mu\,\left| \begin{array}{c} |x-x_0(\lambda,\delta)|\leq\mu,\,\delta\in\mathcal{D}\\
\implies\Psi(x,\lambda)\leq\Gamma_\Psi(\lambda)\end{array}\right.\right\rbrace.
\end{align}
For sufficiently weak uncertainties satisfying $\Delta<\mu^*(\lambda)/2$\,, there exists a finite time $T(\mu,\epsilon)$ for every $\mu\in[\Delta,\mu^*(\lambda)-\Delta]$ and $\epsilon\in(0,\Gamma-\Delta]$ such that $|x(t)|\leq\epsilon+\Delta$ for all $t\geq T(\mu,\epsilon)$ for every $|x(0)|\leq\mu$\,.
\end{proposition}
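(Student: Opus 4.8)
The plan is to prove a uniform ultimate boundedness statement in the same spirit as Proposition~\ref{P:bounded}, but sharpening the conclusion from mere boundedness to finite-time entry into a prescribed sublevel set of $\Psi$. Two ingredients are needed: (i) an invariance argument keeping the trajectory inside $\mathcal{X}$, so that Assumptions~\ref{AS:closeness}--\ref{AS:lyapunov} stay in force along it; and (ii) a uniform strict-decrease estimate for $\Psi$ that converts the asymptotic decay into a finite-time bound.

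First I would establish the invariance. If $|x(0)|\le\mu\le\mu^*(\lambda)-\Delta$, then, since $|x_0(\lambda,\delta)|\le\Delta$ for every $\delta\in\mathcal{D}$ by Assumption~\ref{AS:closeness}, we have $|x(0)-x_0(\lambda,\delta)|\le\mu+\Delta\le\mu^*(\lambda)$, so by the definition of $\mu^*(\lambda)$ it follows that $\Psi(x(0),\lambda)\le\Gamma_\Psi(\lambda)$. By Assumption~\ref{AS:lyapunov}, $\nabla_x\Psi^T f(x,\lambda,\delta)\le-\phi_2(x-x_0(\lambda,\delta))\le 0$, so $\Psi(x(t),\lambda)$ is non-increasing along the trajectory; hence $\Psi(x(t),\lambda)\le\Gamma_\Psi(\lambda)$ and therefore $x(t)\in\mathcal{X}$ for all $t\ge 0$, by the definition of $\Gamma_\Psi(\lambda)$. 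In particular Assumption~\ref{AS:lyapunov} applies along the whole trajectory, and every sublevel set $\{x\,:\,\Psi(x,\lambda)\le c\}$ contained in $\mathcal{X}$ is positively invariant since $\dot\Psi\le 0$ on its boundary.

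Next, given the target tolerance $\epsilon\in(0,\Gamma-\Delta]$, I would fix a level $c=c(\lambda,\epsilon)>0$ such that $\Omega_c:=\{x\,:\,\Psi(x,\lambda)\le c\}\subseteq\{x\,:\,|x|\le\epsilon+\Delta\}\subseteq\mathcal{X}$; such a $c$ exists because $\Psi(x,\lambda)\ge\phi_1(x-x_0(\lambda,0))=\phi_1(x)$ together with $\phi_1$ being continuous, positive definite and radially unbounded, so the small sublevel sets of $\Psi$ shrink toward the origin. The set $\mathcal{R}:=\{x\,:\,c\le\Psi(x,\lambda)\le\Gamma_\Psi(\lambda)\}$ is compact (the set $\{\Psi\le\Gamma_\Psi\}$ is bounded, again by radial unboundedness of $\phi_1$), and provided $c$ is chosen above $\max_{\delta\in\mathcal{D}}\Psi(x_0(\lambda,\delta),\lambda)$ no equilibrium point lies in $\mathcal{R}$; continuity and positive-definiteness of $\phi_2$ then give $\rho:=\min_{x\in\mathcal{R},\,\delta\in\mathcal{D}}\phi_2(x-x_0(\lambda,\delta))>0$. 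Consequently, as long as $x(t)\notin\Omega_c$ we have $\dot\Psi(x(t),\lambda)\le-\rho$, so $\Psi(x(t),\lambda)\le\Psi(x(0),\lambda)-\rho t$; since $\Psi\ge 0$, the trajectory must reach $\Omega_c$ by time $T(\mu,\epsilon):=(\max_{|x|\le\mu}\Psi(x,\lambda))/\rho$, and by positive invariance of $\Omega_c$ it then stays in $\Omega_c\subseteq\{|x|\le\epsilon+\Delta\}$ for all $t\ge T(\mu,\epsilon)$. The hypothesis $\Delta<\mu^*(\lambda)/2$ is what makes the admissible interval $[\Delta,\mu^*(\lambda)-\Delta]$ for $\mu$ nonempty and, more substantively, keeps the equilibrium region deep enough inside the innermost sublevel sets of $\Psi$ for the above choice of $c$ to be available.

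The main obstacle is exactly the compatibility of the two requirements on $c$: it must be small enough that $\Omega_c\subseteq\{|x|\le\epsilon+\Delta\}$ yet large enough that $\mathcal{R}$ avoids the (moving) equilibrium set, so that $-\dot\Psi$ has a uniform positive lower bound on $\mathcal{R}$; equivalently, one must rule out the decrease of $\Psi$ stalling before the state is genuinely $(\epsilon+\Delta)$-close to the origin. Making this airtight for arbitrarily small $\epsilon$ requires a quantitative comparison (of class-$\mathcal{K}$ type) between $\Psi(x,\lambda)$ and the distances $|x|$ and $|x-x_0(\lambda,\delta)|$, which is furnished by sandwiching $\Psi$ between $\phi_1$ and $\phi_2$ in Assumption~\ref{AS:lyapunov}; the remaining steps are routine Lyapunov bookkeeping essentially identical to the proof of Proposition~\ref{P:bounded}.
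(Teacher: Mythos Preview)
Your proposal is correct and follows essentially the same approach as the paper: confine the trajectory to a $\Psi$-sublevel set inside $\mathcal{X}$, choose a small target sublevel set guaranteeing $|x|\le\epsilon+\Delta$, and use a uniform positive lower bound on $-\dot\Psi$ over the intervening annulus to produce a finite entry time. The only cosmetic differences are that the paper takes a $\mu$-dependent outer level $\rho^*$ (you use $\Gamma_\Psi$) and reaches the target ball via $|x-x_0(\lambda,\delta)|\le\epsilon$ rather than your direct $|x|\le\epsilon+\Delta$ route through $\Psi\ge\phi_1(x)$; the compatibility concern you raise about the inner level---small enough for the inclusion yet large enough to keep the decrease rate strictly positive---is in fact not addressed in the paper's own proof, where $\kappa>0$ is simply asserted.
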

\begin{proof}
For every $\mu$ such that $|x(0)|\leq\mu$\,, we have $|x(0)-x_0(\lambda,\delta)|\leq\mu+\Delta$\,. Since $\phi_1(\cdot)$ is positive definite, there exists a $\rho^*$ such that 
\begin{align*}
|x(0)-x_0(\lambda,\delta)|\leq\mu+\Delta\implies\Psi(x(0),\lambda)\leq\rho^*\,.
\end{align*}
For sufficiently weak uncertainties satisfying $\Delta<\mu^*(\lambda)/2$\,, we have $\rho^*\leq\Gamma_\Psi(\lambda)$ for every $\mu\in[\Delta,\mu^*(\lambda)-\Delta]$\,.

Since $\phi_1(\cdot)$ is radially unbounded and positive definite, there exists a $\rho_*\in(0,\rho^*)$ for every $\epsilon\in(0,\Gamma-\Delta]$ such that
\begin{align*}
\Psi(x(t),\lambda)\leq\rho_*&\implies |x(t)-x_0(\lambda,\delta)|\leq\epsilon\\
&\implies|x(t)|\leq\epsilon+\Delta\,.
\end{align*}
Let us define: 
\begin{align*}
\kappa(\lambda):=\min\left\lbrace \phi_2(x-x_0(\lambda,\delta))\,|\,\Psi(x,\lambda)\in[\rho_*,\rho^*]\,,\,\delta\in\mathcal{D}\right\rbrace\!.
\end{align*}
Choosing $T(\mu,\epsilon)=(\rho^*-\rho_*)/\kappa$, we can show that:
\begin{align*}
\forall t\geq T(\mu,\epsilon):~\rho^*-\Psi(x(t),\lambda)&\geq\Psi(x(0),\lambda)-\Psi(x(t),\lambda)\\
&\geq \kappa\,t\geq \kappa\,T(\mu,\epsilon)\geq \rho^*-\rho_*\\
\implies\quad  \Psi(x(t),\lambda)&\leq\rho_*\,.
\end{align*}
This completes the proof.\hfill\hfill\qed\end{proof}

\begin{theorem}
(Main Result) Suppose Assumptions\,\ref{AS:closeness} \& \ref{AS:lyapunov} hold, and the uncertainties are sufficiently weak such that
\begin{align*}
\begin{array}{c}\Delta<\min\left(\Gamma-\nu^*(\lambda),\,\mu^*(\lambda)/2\right)\,,\\
\text{and }~\,\zeta^*(\lambda)<\Gamma_\Psi(\lambda)\,,\end{array}
\end{align*}
then the system $\mathcal{S}[\lambda,\delta]$ in \eqref{E:flamd} satisfies the following boundedness and uniform asymptotic convergences properties: there exists a $\xi>0$ for every $\nu\in[\nu^*(\lambda)+\Delta,\Gamma]$ such that $|x(0)|\leq\xi$ implies $|x(t)|\leq\nu$ for all $t\geq 0$\,, and
\begin{align*}
\forall \mu\in[\Delta,\mu^*(\lambda)-\Delta]:~|x(0)|\leq\mu\implies\lim_{t\rightarrow\infty}|x(t)|\leq\Delta\,.
\end{align*}
\end{theorem}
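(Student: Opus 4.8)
The plan is to obtain the Main Result as an immediate consequence of Propositions~\ref{P:bounded} and~\ref{P:converge}, since the theorem's hypotheses are precisely engineered to activate both. First I would note that the hypothesis $\Delta<\min\!\left(\Gamma-\nu^*(\lambda),\,\mu^*(\lambda)/2\right)$ implies in particular the individual bound $\Delta<\Gamma-\nu^*(\lambda)$, which together with the assumed $\zeta^*(\lambda)<\Gamma_\Psi(\lambda)$ is exactly the premise of Proposition~\ref{P:bounded}. Hence the boundedness claim---existence of a $\xi>0$ for every $\nu\in[\nu^*(\lambda)+\Delta,\Gamma]$ with $|x(0)|\leq\xi$ forcing $|x(t)|\leq\nu$ for all $t\geq0$---is inherited verbatim, with no additional work.

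For the convergence half I would invoke Proposition~\ref{P:converge}, applicable because $\Delta<\mu^*(\lambda)/2$ is assumed. Fix any $\mu\in[\Delta,\mu^*(\lambda)-\Delta]$, any admissible trajectory with $|x(0)|\leq\mu$, and any uncertainty signal $\delta(\cdot)$ valued in $\mathcal{D}$. Proposition~\ref{P:converge} supplies, for each $\epsilon\in(0,\Gamma-\Delta]$, a finite time $T(\mu,\epsilon)$ such that $|x(t)|\leq\epsilon+\Delta$ for all $t\geq T(\mu,\epsilon)$; taking the limit superior along the trajectory gives $\limsup_{t\to\infty}|x(t)|\leq\epsilon+\Delta$. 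Since the admissible range $(0,\Gamma-\Delta]$ has infimum $0$, I may let $\epsilon\downarrow0$ to conclude $\limsup_{t\to\infty}|x(t)|\leq\Delta$, which is the stated convergence property (read as a $\limsup$ since the trajectory need not settle at a single point). Because the same $T(\mu,\epsilon)$ serves all initial conditions in the ball $\{|x(0)|\leq\mu\}$ and all $\delta(\cdot)$, the convergence is uniform in the claimed sense.

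The points that require care are bookkeeping rather than genuine obstacles, and I would spell them out: (i) the trajectory never leaves $\mathcal{X}$, so Assumption~\ref{AS:lyapunov} stays in force for all $t\geq0$---this follows because, under the stated smallness of $\Delta$, the initial value $\rho^*$ appearing in the proof of Proposition~\ref{P:converge} satisfies $\rho^*\leq\Gamma_\Psi(\lambda)$, and $\Psi(\cdot,\lambda)$ is nonincreasing along trajectories, trapping the state in $\{\Psi(\cdot,\lambda)\leq\Gamma_\Psi(\lambda)\}\subseteq\mathcal{X}$; and (ii) the interchange ``$\limsup$ then $\epsilon\downarrow0$'' is legitimate precisely because $T(\mu,\epsilon)$ is finite for each fixed $\epsilon$. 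If any real difficulty were to surface, it would be showing $T(\mu,\epsilon)$ can be chosen independent of the uncertainty realization $\delta(\cdot)$; but this is already secured inside Proposition~\ref{P:converge} via the uniform lower bound $\kappa(\lambda)$ on $\phi_2$ over the compact annulus $\{\Psi(\cdot,\lambda)\in[\rho_*,\rho^*]\}\times\mathcal{D}$, so the present proof merely cites it. I would close by remarking that juxtaposing the two halves yields both the prescribed transient bound for trajectories initialized near the origin and the eventual containment of every trajectory started within the ball of radius $\mu^*(\lambda)-\Delta$ inside the residual ball of radius $\Delta$ about the origin.
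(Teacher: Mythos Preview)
Your proposal is correct and follows precisely the paper's own approach: the paper's proof is the single line ``Follows from Propositions\,\ref{P:bounded} and \ref{P:converge},'' and you have simply unpacked that citation, correctly splitting the hypothesis $\Delta<\min(\Gamma-\nu^*(\lambda),\mu^*(\lambda)/2)$ to activate each proposition and adding the $\epsilon\downarrow 0$ step to pass from the conclusion of Proposition~\ref{P:converge} to the stated $\limsup$ bound. Your additional bookkeeping remarks (trajectory confinement in $\mathcal{X}$, uniformity of $T(\mu,\epsilon)$ in $\delta$) are welcome clarifications that the paper leaves implicit.
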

\begin{proof}
Follows from Propositions\,\ref{P:bounded} and \ref{P:converge}.
\hfill\hfill\qed\end{proof}

\section{Algorithmic Procedure}\label{S:algo}

In this section we present an algorithmic procedure to compute the largest parameter set with certified robust stability. Without any loss of generality, let us assume that $0\in\Lambda$\,,\footnote{This can be achieved by defining new parameters $\tilde{\lambda}=\lambda-\lambda^{\min}$.} and that the origin is a locally asymptotically stable equilibrium point of the nominal (unperturbed) system $\mathcal{S}(0,0)$\,. In the rest of this article, we will restrict ourselves to the identification of the region of design parameter space in the form of 
\begin{align}\label{E:lambda_hat}
\widehat{\Lambda}(\beta):=\left\lbrace \lambda\in\mathbb{R}^l\left|\,G\,\lambda\leq \beta\,h\right.\right\rbrace\,,
\end{align}
where $\beta\geq 0$ is a scalar, $h=[h_i]$ is an $m$-dimensional vector of non-negative scalars, for some $m\geq 1$, i.e. $h_i\geq0\,\forall i\in\lbrace 1,2,\dots,m\rbrace$, and $G=[g_{ij}]$ is an $m\times l$ matrix. Note that $0\in\widehat{\Lambda}(0)$\,. Moreover,
\begin{align*}
\widehat{\Lambda}(\beta_1)\subseteq\widehat{\Lambda}(\beta_2)\quad\forall \beta_2\geq\beta_1\geq 0\,.
\end{align*}

We are interested in solving the following problem:
\mysubeq{E:opt}{
\max_{\Psi(x,\lambda)}&\quad\beta\\
\text{subject to,}&\quad\forall (x,\lambda,\delta)\in\mathcal{X}\!\times\!\widehat{\Lambda}(\beta)\!\times\!\mathcal{D}:\nonumber\\
&\quad\Psi(x,\lambda)\geq \varepsilon_1\left|x\!-\!x_0(\lambda,\delta)\right|^2\\
&\quad\nabla_x\Psi^T\!f(x,\lambda,\delta)\leq -\varepsilon_2\left|x\!-\!x_0(\lambda,\delta)\right|^2\\
&\quad|x_0(\lambda,\delta)|^2\leq\Delta^2
}
where $\mathcal{X}$ and $\mathcal{D}$ are semi-algebraic domains defined in \eqref{E:flamd}, while $\varepsilon_{1,2}$ are small positive scalars. The first two constraints are the Lyapunov conditions, while the third constraint is to make sure that the equilibrium point under uncertainties do not move far from the nominal (desired) equilibrium point at the origin. Using Theorem\,\ref{T:Putinar}, the above problem can be recast into an SOS optimization problem as follows:
\begin{align}\label{E:opt_sos}
\underset{\Psi(x,\lambda),\lbrace s^{k1}_i\rbrace,\lbrace s^{k2}_i\rbrace,\lbrace s^{k3}_i\rbrace\,\forall k\in\lbrace 1,2,3\rbrace}{\max}\quad\beta\qquad\\
\text{subject to:}\qquad\qquad\qquad\qquad\qquad\qquad\qquad\qquad\qquad\notag\\
\!\!\Psi(x,\lambda)- \varepsilon_1\left|x\!-\!x_0(\lambda,\delta)\right|^2-\sum_{i=1}^ps^{11}_ia_i(x)\notag\\
\!\!+\sum_{i=1}^ms^{12}_i(\sum_{j=1}^lg_{ij}\lambda_j-\beta h_i)-\sum_{i=1}^qs^{13}_ib_i(\delta)\in\Sigma[x,\lambda,\delta],\notag\\
-\nabla_x\Psi^T\!f(x,\lambda,\delta) -\varepsilon_2\left|x\!-\!x_0(\lambda,\delta)\right|^2-\sum_{i=1}^ps^{21}_ia_i(x)\notag\\
\!\!+\sum_{i=1}^ms^{22}_i(\sum_{j=1}^lg_{ij}\lambda_j-\beta h_i)-\sum_{i=1}^qs^{23}_ib_i(\delta)\in\Sigma[x,\lambda,\delta],\notag\\
\Delta^2 -\left|x_0(\lambda,\delta)\right|^2\notag\\
\!\!+\sum_{i=1}^ms^{32}_i(\sum_{j=1}^lg_{ij}\lambda_j-\beta h_i)-\sum_{i=1}^qs^{33}_ib_i(\delta)\in\Sigma[x,\lambda,\delta],\notag
\end{align}
where $\lbrace s^{k1}_i\rbrace\,\forall k\in\lbrace 1,2\rbrace,\lbrace s^{k2}_i\rbrace\,\forall k\in\lbrace 1,2,3\rbrace,\lbrace s^{k3}_i\rbrace\,\forall k\in\lbrace 1,2,3\rbrace$ are multi-variate SOS polynomials from the ring $\Sigma[x,\lambda,\delta]$\,. There are two challenges to solving this problem: 1) the explicit functional form of $x_0(\lambda,\delta)$ may not be available in polynomial form (or at all); and 2) the decision variables are in bilinear form, such as the terms $s^{12}_i\beta,\,s^{22}_i\beta,$ and $s^{32}_i\beta$\,. 
{The first challenge can be resolved by obtaining \textit{sufficiently close} polynomial approximation of $x_0(\lambda,\delta)$ via Taylor series expansion around $(\lambda,\delta)=(0,0)$ (or, by polynomial recasting techniques \cite{Antonis:2005}). The second challenge is resolved by reformulating \eqref{E:opt_sos} as an iterative feasibility problem while applying a bisection-search algorithm for the maximum value of $\beta$.}

\section{Example: Inverter-Based Microgrid}\label{S:example}
%
%
We consider a modified version of the CERTS microgrid network described in \cite{lasseter2011certs} as an example. Disconnecting the utility, we replace the substation by a droop-controlled inverter, with two other inverters placed alongside load banks 3 and 5 (no inverters at load banks 4 and 6)\,. Nominal operating point (equilbrium) of the network was obtained by solving the steady-state power-flow equations \eqref{E:PQ}. 
A disturbance set was created by allowing the uncertain parameters to vary within some limits around their nominal values (denoted by superscript `nom') in the form of:
{\begin{align*}
\forall i,\,\forall k\in\mathcal{N}_i:~\left|\frac{\delta_{1,i,k}-\delta_{1,i,k}^{\text{nom}}}{\delta_{1,i,k}^{\text{nom}}}\right|\leq \alpha\,,\,\left|\frac{\delta_{2,i,k}-\delta_{2,i,k}^{\text{nom}}}{\delta_{2,i,k}^{\text{nom}}}\right|\leq \alpha
\end{align*}
w}here the value of $\alpha>0$ denotes different levels of uncertainties. The design parameter set for the droop-coefficients was chosen to be of the form \eqref{E:lambda_hat} with the affine constraints
\begin{align*}
\lambda^p_i\in(0,\beta]\,\text{ and }\,\lambda^p_i\in(0,0.2\beta]\,.
\end{align*}
Small positive scalars were used as the minimum values for the droop-coefficients, as per the typical norm on grid operations. Notice that when $\lambda^p_i\ll 1$ and $\lambda^q_i\ll 1$ the inverter voltage and frequency become \textit{stiff}, not adjusting with network conditions, which is an unfavorable scenario from the network resiliency perspective.
The perturbed equilibrium point is desired to remain within some domain of the form:
\begin{align}\label{E:eq_dom}
\left\lbrace (\omega_i,v_i)\,\left|\,\left(\frac{\omega_i}{\omega^{\max}}\right)^2+\left(\frac{v_i-v_i^d}{\Delta v^{\max}}\right)^2\leq c\right.\right\rbrace
\end{align}
where $\omega^{\max}$ was set to $0.7\,$Hz, and $\Delta v^{\max}$ to $0.2\,$p.u.\,. The value of $c$ was varied to investigate different uncertainty scenarios. Note that the constraint defining the domain \eqref{E:eq_dom} is equivalent to the third constraint in \eqref{E:opt}, albeit after scaling and shifting. The choice of $c$ influences the possible set of design parameter values (with smaller values yielding narrower design space).
\begin{figure*}[thpb]
\centering
\subfigure[parametric set for inverter \#1]{
\includegraphics[scale=0.32]{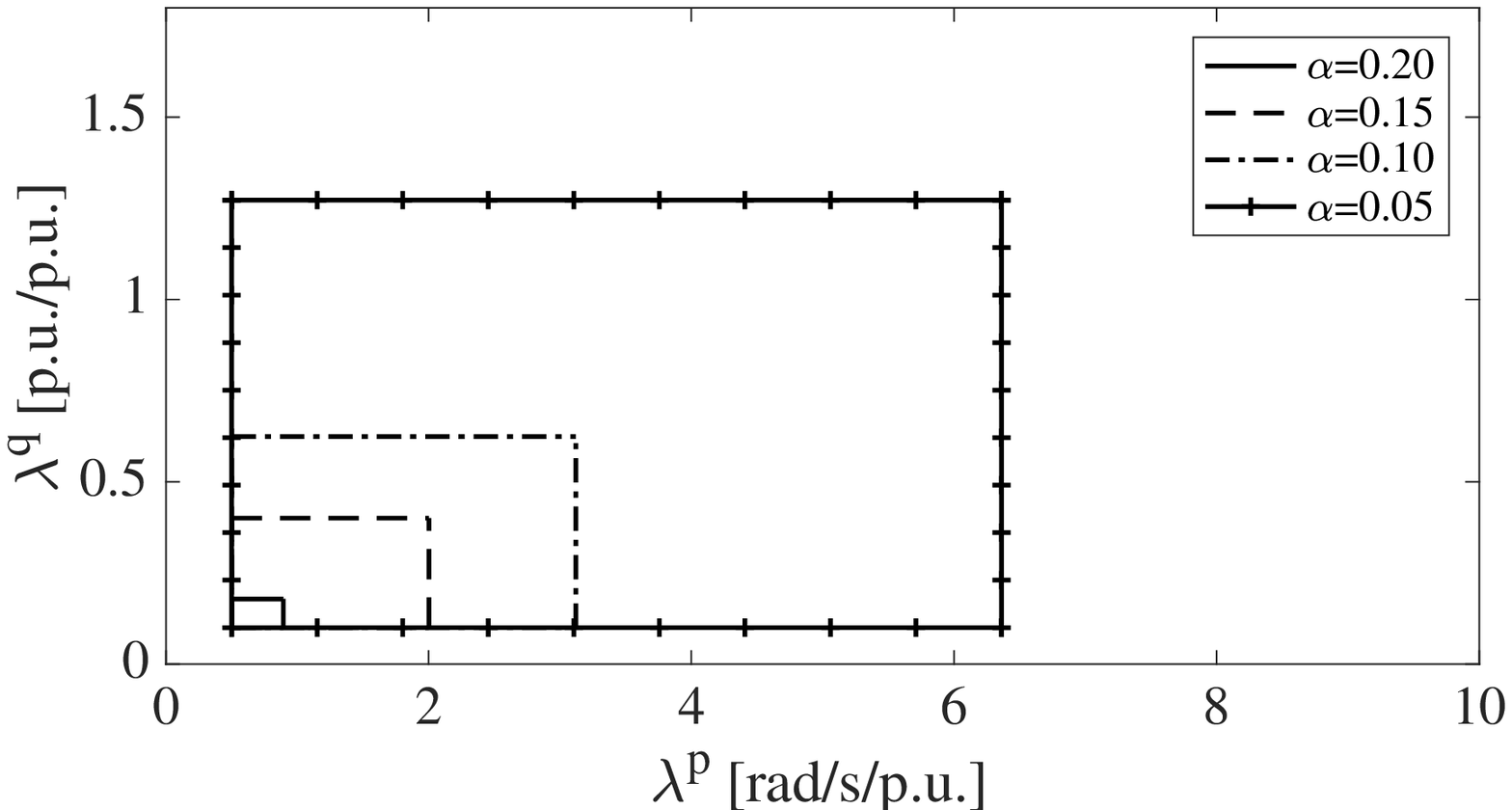}\label{F:inverter1}
}
\hspace{-0.4in}
\subfigure[parametric set for inverter \#2]{
\includegraphics[scale=0.32]{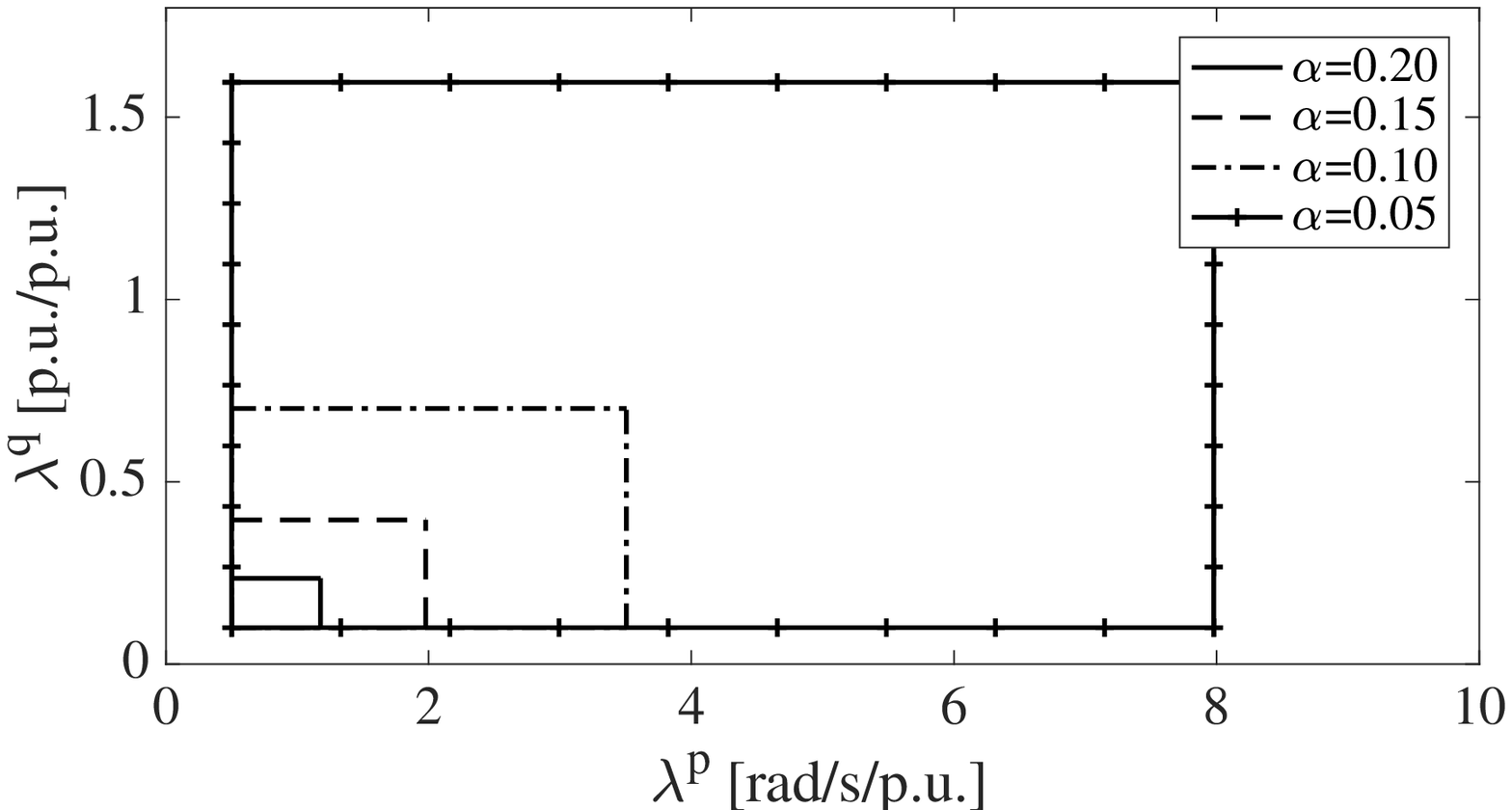}\label{F:inverter2}
}
\hspace{-0.4in}
\subfigure[parametric set for inverter \#3]{
\includegraphics[scale=0.32]{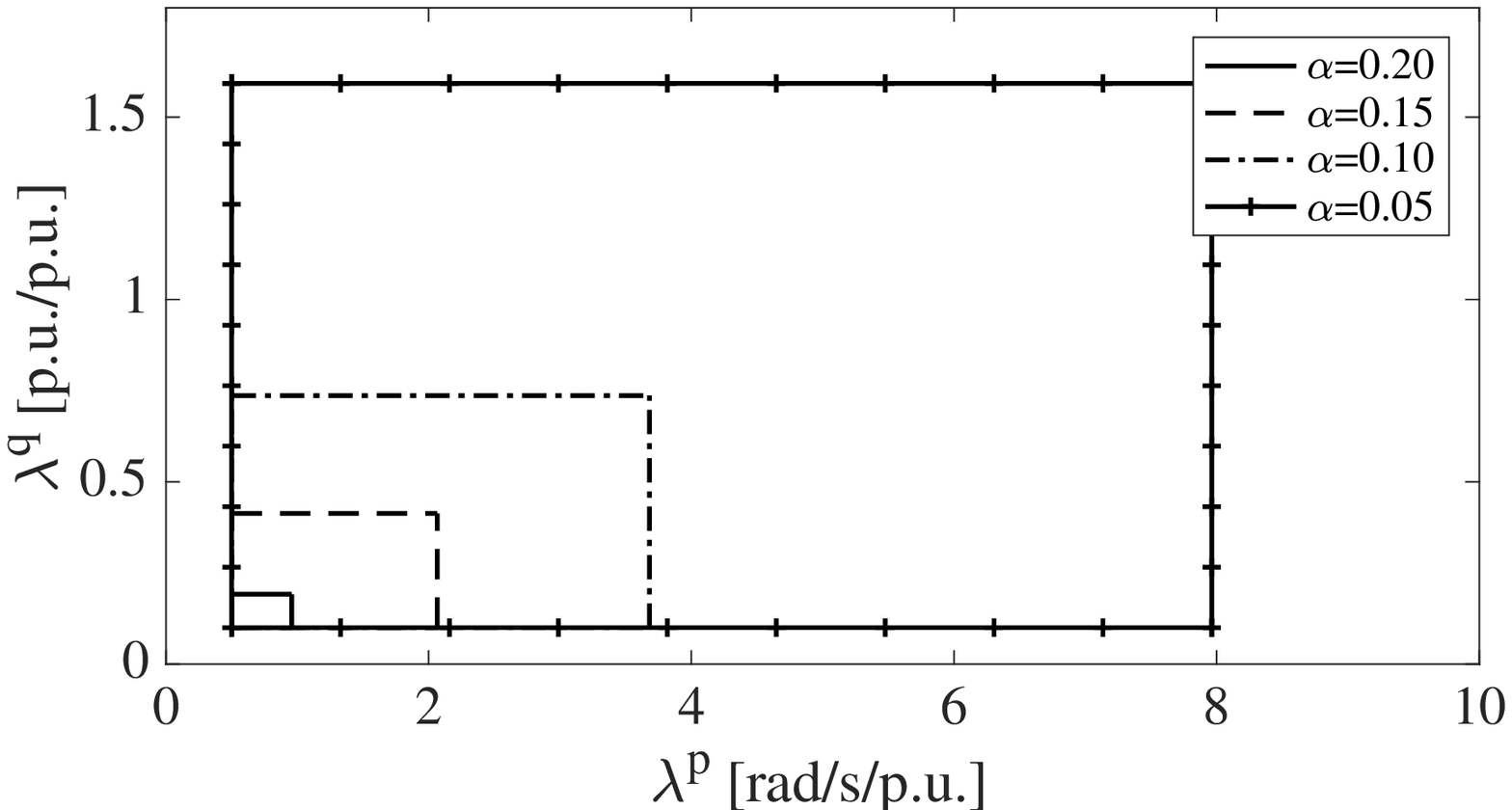}\label{F:inverter3}
}
\hspace{-0.4in}
\subfigure[parametric set for inverter \#1]{
\includegraphics[scale=0.32]{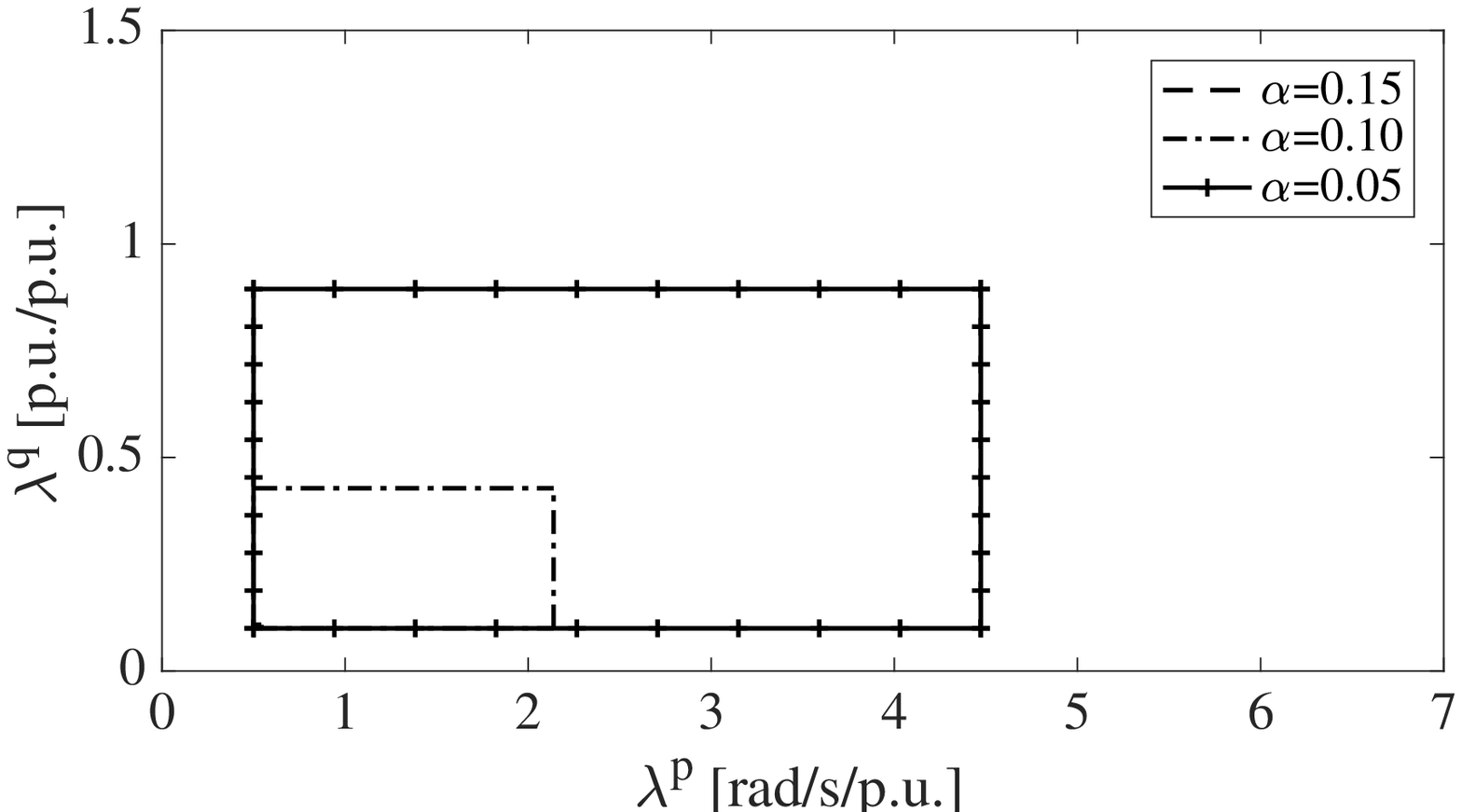}\label{F:inverter1_small}
}
\hspace{-0.4in}
\subfigure[parametric set for inverter \#2]{
\includegraphics[scale=0.32]{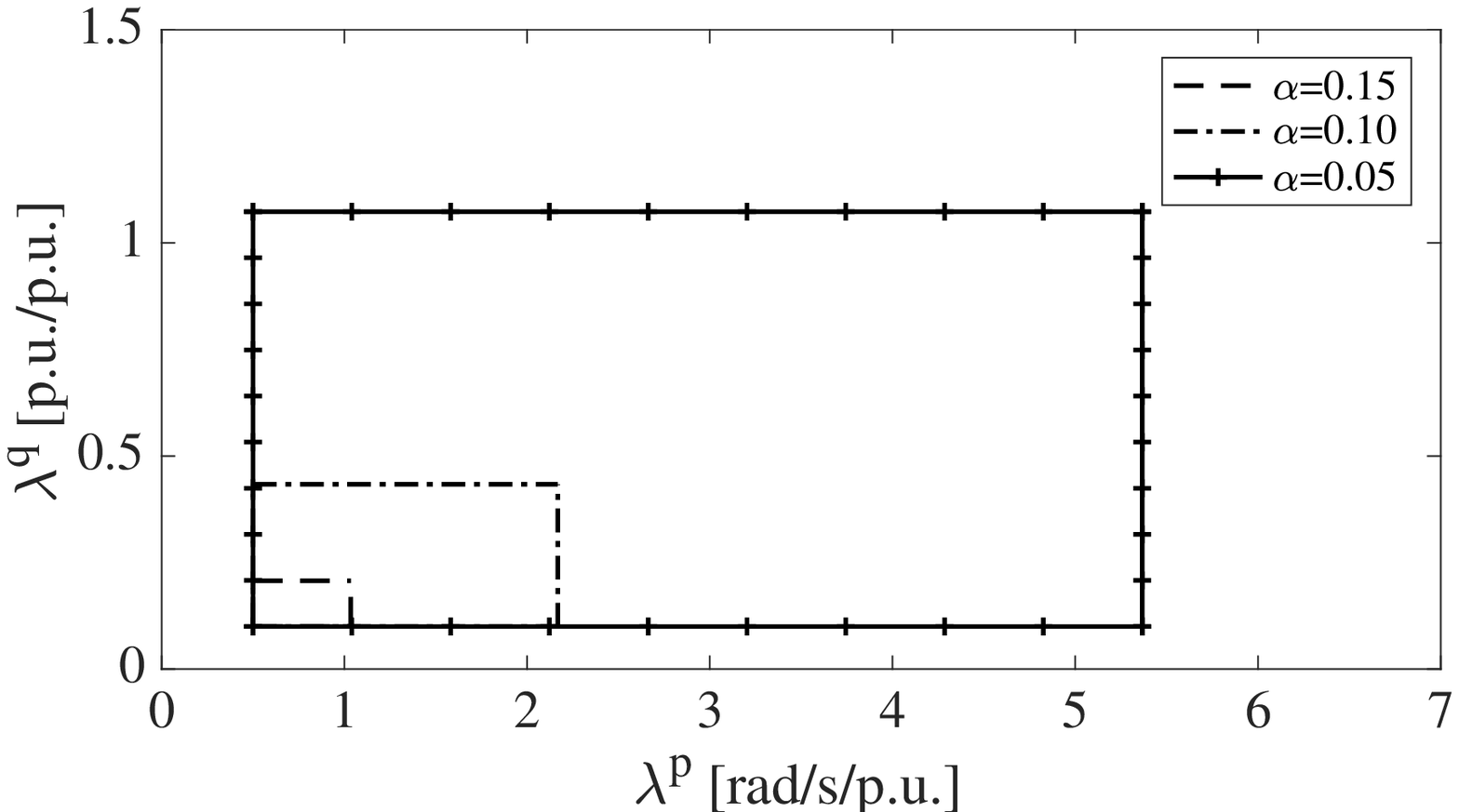}\label{F:inverter2_small}
}
\hspace{-0.4in}
\subfigure[parametric set for inverter \#3]{
\includegraphics[scale=0.32]{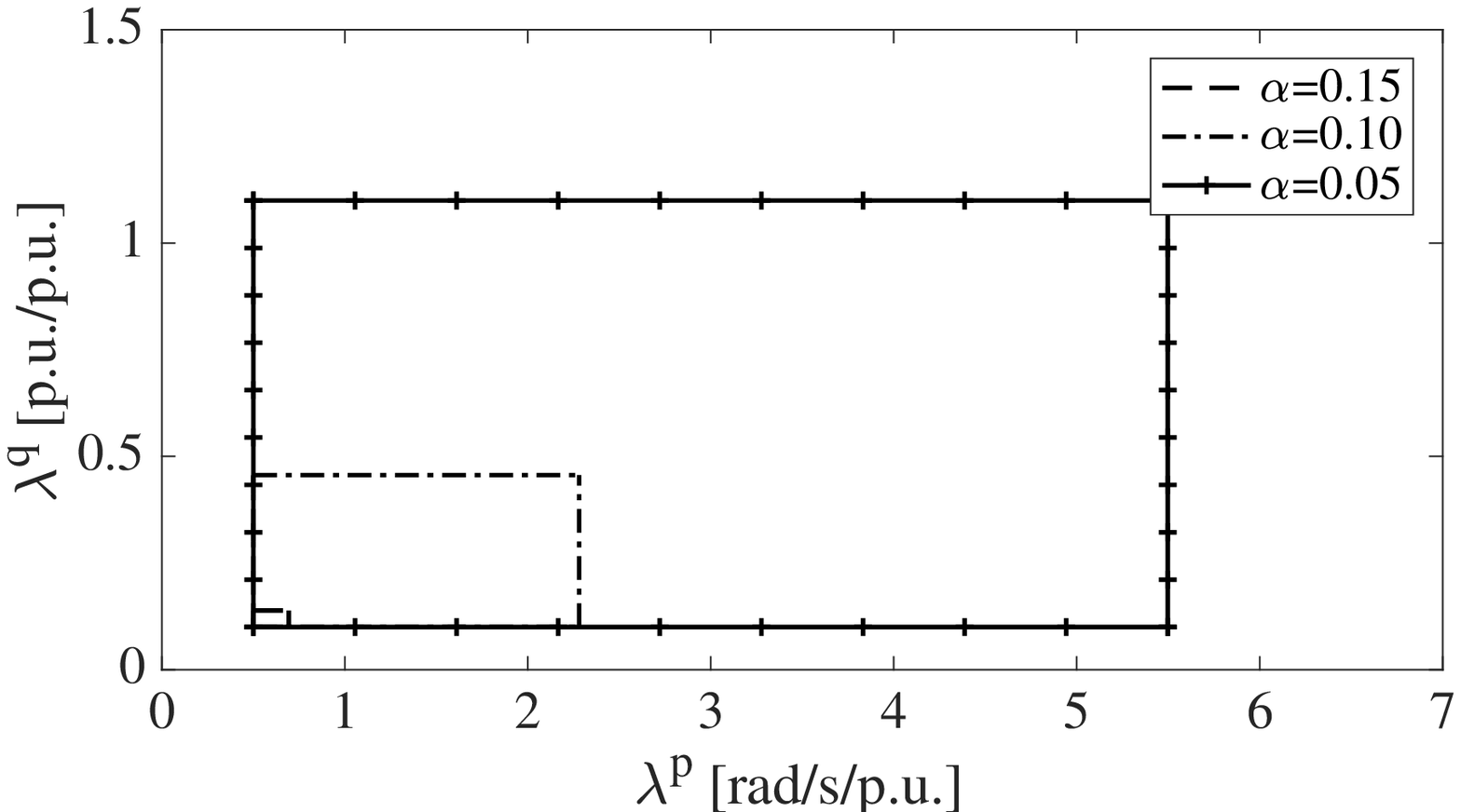}\label{F:inverter3_small}
}
\caption[Optional caption for list of figures]{Identified robust stability region for the inverter droop-coefficients under varying uncertainty levels ($\alpha$), for two different values of $c$: $c=1$  for (a)-(c), while $c=0.5$ for (d)-(e). The parametric stability region shrinks as uncertainty ($\alpha$) increases, and as allowable perturbation on the equilibrium ($c$) decreases. }
\label{F:c1}
\end{figure*}
%
%
%
%
%
%
%
%
Fig.\,\ref{F:c1} shows the identified robustly stable design parameter space for the inverters under varying uncertainties in the exogenous input, for two different values of $c$, which refer to different levels of perturbations allowed on the equilibrium point ($c=1$ allows larger perturbation than $c=0.5$)\,. The design space shrinks as the uncertainty level rises (higher value of $\alpha$) and as the allowable perturbation on the equilibrium point is reduced.

\section{Conclusion}\label{S:concl}
{In the context of robust plug-and-play {design} of nonlinear networks, we address the problem of identifying the largest region in the design parameter space that ensures asymptotic convergence of the states of the connected element under uncertainties in the network. We derive novel theoretical conditions of robust stability, as well as develop a SOS programming algorithm to identify the largest stability region in the design parameter space. Numerical illustrations are provided in the context of identifying droop-coefficient values of inverters for a plug-and-play {operation} of microgrids. Future work will explore the scalability and applicability of the algorithm to large-scale microrgid networks with other forms of dynamic resources (responsive loads, diesel generators).}


\section*{Acknowledgment}

This work was carried out under support from the U.S. Department of Energy as part of their Resilient Electric Distribution Grid R\&D program (contract DE-AC05-76RL01830).



\bibliographystyle{IEEEtran}
\bibliography{IEEEabrv,references,RefMGStability}
%
%

\end{document}